\documentclass[10pt]{amsart}
\usepackage{fullpage}

\usepackage{amsmath, amsthm, amssymb, graphicx}

\newcounter{citedtheorems}

\newtheorem{defn}{Definition}[section]
\newtheorem{theorem}[defn]{Theorem}
\newtheorem*{theorem-abs1}{Theorem \ref{ind-theorem}}
\newtheorem*{theorem-abs2}{Theorem \ref{a23}}
\newtheorem*{theorem-abs3}{Theorem \ref{ind-new}}
\newtheorem*{theorem-abs4}{Theorem \ref{m1}}
\newtheorem*{theorem-acor}{Conclusion \ref{k26}}
\newtheorem{thm-lit}[citedtheorems]{Theorem}
\newtheorem{defn-lit}[citedtheorems]{Definition}
\newtheorem{fact-lit}[citedtheorems]{Fact}
\newtheorem{fact}[defn]{Fact}
\newtheorem{cor}[defn]{Corollary}

\newtheorem{conv}[defn]{Convention}
\newtheorem{claim}[defn]{Claim}

\newtheorem{disc}[defn]{Discussion}

\newtheorem{expl}[defn]{Example}

\newcommand{\br}{\vspace{2mm}}

\newcommand{\ml}{\mathcal{L}}
\newcommand{\tlf}{\trianglelefteq}

\newcommand{\de}{\mathcal{D}}

\newcommand{\trv}{\mathbf{t}}

\newcommand{\vp}{\varphi}

\title{Notes on the stable regularity lemma}

\author{M. Malliaris and S. Shelah}
\thanks{\emph{Thanks:} Malliaris was partially supported by NSF DMS-1553653 and by a Minerva Research Foundation membership at the IAS. 
Shelah was partially supported by 
ERC grant 338821.   Both authors thank 
NSF grant 1362974 (Rutgers) and ERC 338821.  This is paper E98 in Shelah's list. 
These notes especially benefitted from lectures in the Chicago REU 
over several summers.  
We thank 
E.  Bajo  
and C. Terry for helpful comments on versions of this manuscript. } 

\address{Department of Mathematics, University of Chicago, 5734 S. University Avenue, Chicago, IL 60637, USA}
\email{mem@math.uchicago.edu}

\address{Einstein Institute of Mathematics, Edmond J. Safra Campus, Givat Ram, The Hebrew
University of Jerusalem, Jerusalem, 91904, Israel, and Department of Mathematics,
Hill Center - Busch Campus, Rutgers, The State University of New Jersey, 110
Frelinghuysen Road, Piscataway, NJ 08854, USA.}
\email{shelah@math.huji.ac.il}
\urladdr{http://shelah.logic.at}




\begin{document}

\begin{abstract} 
This is a short expository account of the regularity lemma for stable graphs proved by the authors, with some comments on the 
model theoretic context, written for a general logical audience.    
\end{abstract}

\maketitle

Some years ago, we proved a ``stable regularity lemma'' showing essentially that Szemer\'edi's regularity lemma behaves much better if the graphs in question do not contain large half-graphs \cite[Theorem 5.18]{MiSh:978}.   Since that time, it has been a pleasure to 
see the work which has grown out from that theorem, with 
various interesting extensions, further developments, and new directions worked out by many different colleagues.  
Nonetheless, it seems the clear `picture' of the original proof has not necessarily been widely communicated. Perhaps having a short exposition 
available may help inspire further interactions and applications. 

So in these brief expository notes we give a short overview of the proof itself and the model-theoretic ideas behind the proof.  
Recall that our story begins with: 

\begin{thm-lit}[Szemer\'edi's regularity lemma, 1978] 
For every $\epsilon > 0$ there is $N(\epsilon)$ s.t. every finite graph $G$ may be partitioned into $m$ classes
$V_1 \cup \cdots \cup V_m$ where $m \leq N$ and:

\begin{itemize}
\item all of the pairs $V_i, V_j$ satisfy $||V_i|-|V_j|| \leq 1$.
\item {all but at most $\epsilon m^2$} of the pairs $(V_i, V_j)$ are $\epsilon$-regular.
\end{itemize}
\end{thm-lit}

It was known by work of Gowers that the bound $N$ on the number of pieces is very large as a function of $\epsilon$ \cite{gowers}.  Regarding whether the irregular pairs can be eliminated, 
in \cite{ks}, section 1.8, Koml\'os and Simonovits write: ``\emph{The Regularity Lemma 
does not assert that all pairs of clusters are regular.  In fact, it allows $\epsilon k^2$ pairs to be irregular.  For a long time it was not 
known if there must be irregular pairs at all. It turned out that there must be at least $ck$ irregular pairs.}'' They continue: 
``\emph{Alon, Duke, Leffman, R\"odl and Yuster \cite{alon} write: ‘In \cite{sz1} the author raises the question if the assertion of the lemma holds when we do not allow any irregular pairs in the definition of a regular partition. This, however, is not true, as observed by several researchers, including 
L. Lov\'asz, P. Seymour, T. Trotter and ourselves. A simple example showing irregular pairs are necessary is a bipartite graph with vertex classes 
$A = \{ a_1,...,a_n \}$ and $B = \{ b_1,...,b_n \}$ in which $a_i b_j$ is an edge iff $i \leq j$}.’ 
[footnote: \emph{This important graph is called the half-graph.}] ”

The stable regularity lemma will show that half-graphs \emph{characterize} existence of irregular pairs by proving that in the 
absence of long half-graphs, there is a much stronger regularity lemma, which among other things, has no irregular pairs. 
At the time, the idea that there might be better regularity lemmas on certain sub-classes of graphs was not new:  
it was already known \cite{alon}, \cite{l-s} that assuming bounded VC dimension, the number of pieces could be 
taken to be polynomial in $\frac{1}{\epsilon}$ (though necessarily with irregular pairs).  
However, to our knowledge, the order property was not suspected 
by the combinatorial community be an indicator of a sea-change in structure. 

To a model theorist, half-graphs are an instance of the \emph{order property} for the graph edge relation. In the case of 
infinite structures, 
we know from the second author's \emph{Classification Theory}, Theorem II.2.2 that the presence or absence of the order property, {here} 
the presence or absence of infinite half-graphs, is a very strong indicator of a change in structural properties, the 
\emph{dividing line} at stability.    
One of the contributions of \cite{MiSh:978} was the idea that one might try to 
finitize some of the structure familiar from stability to prove that when half-graphs are small relative to the size of the 
finite graph, one may look for suitable finite approximations to stable behavior -- and thus build a much stronger regularity lemma.     
Note that an interesting line of work starting with C. Terry and J. Wolf \cite{tw} has since carried this idea further and into the arithmetic setting.

\section{Proof of the lemma}

In this section we review the Stable Regularity Lemma as it was proved in \cite{MiSh:978}, Section 5.   All graphs in the paper are finite. 

Given $k \in \mathbb{N}$, we say the graph $G$ is \emph{$k$-edge stable} if it contains no half-graph of length $k$. That is, there do not exist 
distinct vertices $a_1, \dots, a_k, b_1, \dots, b_k$ of $G$ such that $R(a_i, b_j)$ holds iff $i<j$. (Note that this forbids a family of graphs, not just a bipartite half-graph.)

\begin{theorem}[Stable regularity lemma, \cite{MiSh:978}] \label{t:sr}
For each $\epsilon > 0$ and $k \in \mathbb{N}$ there is $N = N(\epsilon, k)$ such that for  
any sufficiently large finite $k$-edge stable graph $G$, for some $\ell$ with $\ell \leq N$, 
$G$ can be partitioned into disjoint pieces $A_1, \dots, A_\ell$ and: 
\begin{enumerate}
\item the partition is equitable, i.e. the sizes of the pieces differ by at most $1$, 
\item \emph{all} pairs of pieces $(A_i, A_j)$ are $\epsilon$-regular, and moreover have density 
either $>1-\epsilon$ or $<\epsilon$. 
\item  $N < (\frac{4}{\epsilon})^{2^{k+3}-7}$. 
\end{enumerate} 
\end{theorem}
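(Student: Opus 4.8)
The plan is to find inside any sufficiently large $k$-edge stable $G$ a small set $B$ of vertices over which almost every vertex has an (approximately) edge-determined type, to take the partition of $V(G)$ into $B$-types as the skeleton of the regular partition, and to bound $|B|$---hence the number of parts---by the forbidden half-graph. Write $N(v)=\{w\in V(G):R(v,w)\}$, and for $X,Y\subseteq V(G)$ let $d(X,Y)$ be the edge density across $(X,Y)$. Fix a small auxiliary $\delta=\delta(\epsilon)$, a suitable fixed power of $\epsilon/4$. The first step is the routine reduction: it suffices to partition $V(G)$ into pieces $A_1,\dots,A_{\ell'}$, \emph{not} necessarily equitable, so that for every ordered pair $(i,j)$ and \emph{every} $v\in A_i$ one has $|N(v)\cap A_j|/|A_j|>1-\delta$ or $<\delta$. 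Such a partition automatically has $d(A_i,A_j)>1-\delta$ or $<\delta$ for every pair, and a pair in which every vertex on one side is joined to all but a $\delta$-fraction of the other side is $\epsilon$-regular once $\delta$ is small relative to $\epsilon$; one then subdivides each $A_i$ into blocks of near-equal size to make the partition equitable, which costs only a controlled factor in the number of parts and preserves extreme density (hence $\epsilon$-regularity) of all pairs. So the whole problem reduces to producing a partition with this ``extreme-density'' property and with $\ell'$ bounded by a function of $\epsilon$ and $k$ alone.

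I would build such a partition by a finite greedy process, mimicking the construction of a definable $\varphi$-type over a model. Start with $B_0=\emptyset$; given $B_i$, let $\mathcal P_i$ be the partition of $V(G)$ into $B_i$-types (vertices with the same neighbours inside $B_i$), so $|\mathcal P_i|\le 2^{|B_i|}$. Discard every part of $\mathcal P_i$ of size below a second threshold $\delta'|V(G)|$ (again a fixed power of $\epsilon/4$); in total this loses at most $2^{|B_i|}\delta'|V(G)|$ vertices, to be redistributed at the end. If the partition into the surviving large parts already has the extreme-density property, stop. Otherwise there are large parts $A,A'$ and a vertex $v\in A$ with $\delta\le|N(v)\cap A'|/|A'|\le 1-\delta$; add such a $v$ to $B$, i.e.\ set $B_{i+1}=B_i\cup\{v\}$. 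Then in $\mathcal P_{i+1}$ the large part $A'$ is split into $A'\cap N(v)$ and $A'\setminus N(v)$, each at least a $\delta$-fraction of $A'$ and so of size at least $\delta\delta'|V(G)|$: the process genuinely consumes room at every step, and the number of parts at the end is at most $2^{|B|}$ up to the per-part subdivision absorbed into the final bound.

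The crux, and the main obstacle, is to show this process halts with $|B|$ bounded purely in terms of $k$ (the dependence on $\epsilon$ entering only through $\delta,\delta'$). This is the finitary shadow of the fact that a stable formula has bounded rank and no long order. If the process ran for too long, I would look at the tree of large parts it generates: along a branch one gets a nested sequence of large parts $P_0\supseteq P_1\supseteq\cdots$ and, at each level, a vertex that is simultaneously at least a $\delta$-fraction joined and at least a $\delta$-fraction non-joined to every large descendant of its part; from the splitting vertices together with carefully chosen representatives of deep cells one then extracts vertices $a_1,\dots,a_k,b_1,\dots,b_k$ with $R(a_i,b_j)$ holding exactly when $i<j$, a half-graph of length $k$, contradicting $k$-edge stability. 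The delicate points---and where the work goes---are (i) arranging the representatives so the edge/non-edge pattern is exact rather than merely approximate: at each level one must pass to whichever side (joined or non-joined) retains enough large descendants to keep going, and keep the surviving mass above the threshold $\delta\delta'$ all the way down; and (ii) making this extraction quantitatively efficient, so that the length of the process is bounded by the right explicit function of $k$. Tracking how the $\delta$- and $\delta'$-fractions compound down the tree is exactly what forces the specific choices of $\delta$ and $\delta'$ as powers of $\epsilon/4$ and, together with the bound $2^{|B|}$ on the number of cells and the subdivision needed for equitability, yields $N<(4/\epsilon)^{2^{k+3}-7}$.

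Everything else is routine once the right constants are in hand: the opening reduction to the extreme-density property, the subdivision into near-equal blocks to make the partition equitable, the redistribution of the few vertices in discarded small cells, and the check that the extreme-density property implies $\epsilon$-regularity of all pairs. The one genuinely load-bearing input is the no-half-graph hypothesis, used exactly once, to bound the length of the greedy construction.
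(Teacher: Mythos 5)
There is a genuine gap, and it sits exactly at the step you identify as the crux. Your termination argument is that if the greedy process ran too long one could extract a half-graph of length $k$, so that $k$-edge stability bounds $|B|$ (essentially by a function of $k$). But the length of a run of this process is not something stability controls: a long run only produces a long \emph{chain} of genuine splits (at each step one chosen cell is split, and only along the branch you keep), and long chains are perfectly consistent with stability --- for instance a disjoint union of cliques of sizes $n/2, n/4, n/8, \dots$ contains no half-graph of length $3$, yet your process performs about $\log_2(1/\delta')$ consecutive genuine splits on it. In fact a simple laminar-family count (the chosen cells are distinct, pairwise nested-or-disjoint, of size $\geq \delta' n$, and nesting loses a factor $1-\delta$ each time) shows the process halts after at most roughly $\frac{1}{\delta\delta'}\log\frac{1}{\delta'}$ steps for \emph{every} graph, stable or not; so no contradiction with stability can be extracted from mere length, and the ``one load-bearing use of the no-half-graph hypothesis'' is not available where you placed it. What stability actually forbids (Fact \ref{tree-fact}) is a deep \emph{complete binary tree} of genuine splits, and nothing in your process forces the genuine splits to organize into a balanced tree: each step splits only one cell, and the two sides need not both stay large or ever be split again. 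This is precisely why the paper does not split by single vertices: in Claim \ref{claim1} the splitter at each node is an $\epsilon$-\emph{good set} $B_\eta$ witnessing failure of excellence (Definition \ref{d:excellent}), so \emph{both} halves have size $\geq \epsilon|A_\eta|$ at \emph{every} node; if no excellent set appears within $t(k)$ levels one gets a full binary tree, and goodness of the witnesses lets one choose single representatives with the exact edge pattern to all $2^t$ leaves, contradicting the tree bound.

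The failure propagates into the rest of your accounting. You need the discarded mass $2^{|B|}\delta'|V(G)|$ (or, using Sauer--Shelah, $|B|^{O(k)}\delta'|V(G)|$) to be a small fraction of $|V(G)|$, but the only bounds on $|B|$ actually available depend on $\delta'$ itself, so the estimate is circular; and without a genuine use of stability here, the stopping condition can trigger with most of the graph sitting in sub-threshold atoms (in a random graph this is exactly what happens: the condition is met only once essentially no large parts remain), so the surviving large atoms need not cover most of $G$ and the redistribution step has nothing to stand on. Two smaller points: the claim that the pointwise ``extreme-density'' property \emph{automatically} gives $d(A_i,A_j)>1-\delta$ or $<\delta$ is false one-sided (half of $A_i$ could be joined to almost all of $A_j$ and half to almost none); it can be rescued using the condition in both directions by a counting argument, but only with degraded constants, which is essentially the content of the paper's coherence notion (excellence plus Claim \ref{excellent-regular}), not a formality. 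And with the central mechanism broken, the claimed bound $N<(4/\epsilon)^{2^{k+3}-7}$ is not supported by the sketch.
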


The strategy of \cite[\S 5]{MiSh:978} is to use the hypothesis of $k$-edge stability to divide a given graph into pieces 
which have an atomicity property called $\epsilon$-excellence. It will follow from the definition that the distribution 
of edges between any two $\epsilon$-excellent sets is highly uniform, in fact $\delta$-regular for a related $\delta$.  
So if we can get an equipartition into excellent pieces, we will have regularity for \emph{all} pairs, just by construction. 
This is exactly what the paper does: proves a regularity lemma with excellent pieces (Theorem \ref{su} below), from which 
Theorem \ref{t:sr} is easily deduced. 

After giving the proof, we will make some comments on the model theoretic ideas behind it. 

Comment on notation: we consider graphs model-theoretically, that is, 
as a set of vertices on which there is a certain symmetric irreflexive binary relation, the edge relation. This is reflected in 
writing things like (a) ``$A \subseteq G$'' to mean $A$ is an induced subgraph of $G$ corresponding to this set 
of vertices, (b) ``$b \in G$'' to mean $b$ is a vertex of $G$, and (c) writing ``size of''  a graph $G$ or of some $A \subseteq G$ to mean the number of vertices.

\begin{defn} \label{d:good}
Call $A \subseteq G$ \emph{$\epsilon$-good} if for any $b \in G$ either $| \{ a \in A : R(b,a) \}| < \epsilon|A|$ or 
$| \{ a \in A : \neg R(b,a) \} | < \epsilon |A|$. 
\end{defn}

In other words, any $b \in G$ (not necessarily outside $A$) induces a partition of $A$ into two pieces, and $\epsilon$-good means 
any such partition is strongly imbalanced: there is a \emph{majority opinion} in $A$ regarding whether to connect to $b$. We may express this by saying that there is a 
truth value $\trv = \trv(b,A) \in \{ 0, 1\}$ and that for all but $<\epsilon|A|$ of the elements of $A$, $R(b,a)^\trv$ holds. 
(In logical shorthand, for a formula $\vp$, $\vp^1 = \vp$ and $\vp^0 = \neg \vp$.)

Note that for any $a \in G$ and any $\frac{1}{2} > \epsilon > 0$, $\{ a \}$ is $\epsilon$-good. 
So a posteriori the definition ``$A$ is good'' can be described by saying: elements of $A$ have a majority opinion with respect to 
\emph{certain specific} good sets, namely the singletons. Of course, we can ask for majority opinions with respect to larger good sets. 
This leads naturally to the definition of \emph{excellent}, informally, that elements of $A$ have  
coherent, majority opinions with respect to \emph{all} other good sets. 

\begin{defn} \label{d:excellent}
Call $A \subseteq G$ \emph{$\epsilon$-excellent} if for any $B \subseteq G$,  if $B$ is $\epsilon$-good then either 
$|\{ a \in A : \trv(a,B) = 1 \}| < \epsilon|A|$ or $|\{ a \in A : \trv(a,B) = 0 \}| < \epsilon|A|$. 
\end{defn}

First note that if $A$ is $\epsilon$-excellent it is $\epsilon$-good.  
(When $B = \{ b \}$, $\trv(a,\{b\})$ is simply 1 if $R(a,b)$ and $0$ if $\neg R(a,b)$. 
Since any singleton set is good, the definition entails that if $A$ is excellent and $b \in G$, then either 
$| \{ a \in A : R(a,b) \} | < \epsilon |A|$ or $| \{ a \in A : \neg R(a,b) \}| < \epsilon |A|$. Thus $A$ is $\epsilon$-good.)
So existence of larger $\epsilon$-good sets under stability will be a special case of \ref{claim1}. 
In the case of singleton sets, we will write simply $\trv(a,b)$. 

In Definition \ref{d:excellent}, as $B$ is good, any element $a \in A$ will reveal a majority opinion among elements of $B$ regarding whether to connect to $a$. 
If $A$ is an excellent set, most of the time the revealed opinion is the same. We may express this by clause (a) of 
Claim \ref{excellent-regular} below. 

It isn't obvious that large $\epsilon$-excellent subsets of a given graph should 
exist (e.g. a random graph tends not to have nontrivial $\epsilon$-good sets); 
our proof will use $k$-edge stability in a direct way. 
We will use the following definition and fact from model theory, which specialized to our case says that from edge stability we may 
infer a specific finite bound on the height of a certain tree, which 
let us call a special tree.

\begin{defn}
A \emph{full special tree of height $n$} in a graph is a configuration consisting of two sequences of vertices, $\langle b_\rho : \rho \in 2^{<n} \rangle$, called 
nodes, and $\langle a_\eta : \eta \in 2^n\rangle$, called leaves, with edges satisfying the following constraint: 
given $\eta \in 2^n$ and $\rho \in 2^{<n}$, \emph{if} $\rho^\smallfrown\langle \ell \rangle \trianglelefteq \eta$ \emph{then} $R(a_\eta, b_\rho)^\ell$. 
\end{defn}

Other than the edges and non-edges which we have explicitly mentioned, anything is allowed; so as with half-graphs, asserting 
that there is no special tree of a certain height forbids a family of configurations.

\begin{expl}
Consider a special tree of height 2 with nodes $b_\emptyset, b_0, b_1$ and leaves 
$a_{00}, a_{01}, a_{10}, a_{11}$. Then the following 
edges and non-edges must occur: 
$R(b_\emptyset, a_{11})$, $R(b_\emptyset, a_{10})$,  
$\neg R(b_\emptyset, a_{01})$, $\neg R(b_\emptyset, a_{00})$, 
$R(b_1, a_{11})$, $\neg R(b_1, a_{10})$,  
$R(b_0, a_{01})$, $\neg R(b_0, a_{00})$. 
\end{expl}

The relevant fact relating edge stability to special trees is the following. 

\begin{fact}[see e.g. Hodges \cite{hodges} Lemma 6.7.9 p. 313] \label{tree-fact} \emph{ }
\begin{enumerate}
\item If the graph $G$ is $k$-edge stable, then there is no full special tree in $G$ of height $2^{k+2}-2$.
\item If $G$ contains no full special trees of height $n$, then $G$ is $2^{n+1}$-edge stable.  
\end{enumerate}
\end{fact}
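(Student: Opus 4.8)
The plan is to prove both parts by contraposition; part (2) is a routine bisection, and part (1) is the substantive one.

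For part (2): suppose $G$ is not $2^{n+1}$-edge stable, so it contains a half-graph of length $2^{n+1}$ with vertices $a_1,\dots,a_{2^{n+1}}$, $b_1,\dots,b_{2^{n+1}}$ and $R(a_i,b_j)$ holding iff $i<j$. I would build a full special tree of height $n$ in $G$ by iterated bisection of this linear order. Identify the index set $\{1,\dots,2^n\}$ with $2^n$; each node $\rho\in 2^{<n}$ then names a dyadic subinterval $I_\rho$ of $\{1,\dots,2^n\}$, which $\rho$ splits into the halves $I_{\rho^\smallfrown\langle 0\rangle}$ and $I_{\rho^\smallfrown\langle 1\rangle}$, and one takes $b_\rho$ to be a $b$-vertex whose index sits at the boundary between these halves (with a suitable orientation, and using that a half-graph of length $2^{n+1}$ supplies a few more vertices than the $2^n$ leaves and $2^n-1$ nodes actually needed, which also absorbs the difference between $<$ and $\le$). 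A leaf indexed inside $I_{\rho^\smallfrown\langle\ell\rangle}$ then has its edge to $b_\rho$ forced in the direction prescribed by $\ell$, which is exactly the special-tree constraint. The bound $2^{n+1}$ is generous; that is the only reason it is not tight.

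For part (1): I would show that a full special tree of height $h:=2^{k+2}-2$ forces a half-graph of length $k$. It helps to observe first that the existence of such a tree in $G$ is equivalent to the assertion that the Cantor--Bendixson-style $2$-rank of $G$ --- with respect to the family of neighbourhoods $\{a:R(a,b)\}$ and their complements --- is at least $h$: a tree node whose two subtrees are again special trees of height $h-1$ is precisely a vertex $b$ witnessing that the rank drops by one on passing to $\{a:R(a,b)\}$ and to its complement. So the claim is equivalent to: a $k$-edge stable graph has $2$-rank at most $2^{k+2}-2$. The natural route is induction on $k$ through the recursion $h(k)=2\,h(k-1)+2$ (with $h(1)=2^3-2=6$): from a configuration of rank $h(k)$ one peels off a cutting vertex $b$, applies the inductive hypothesis to one side (of rank $\ge 2h(k-1)+1\ge h(k-1)$) to get a half-graph of length $k-1$, and then splices in $b$ together with one further vertex to push the length up to $k$. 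To make the splicing legitimate one must carry along a strengthened inductive hypothesis: the length-$(k-1)$ half-graph should come equipped with an auxiliary high-rank ``witness'' set whose elements relate uniformly to the half-graph, from which the new vertices --- and a regenerated witness for the next round --- can be extracted. The $2h(k-1)$ in the recursion is what pays for the recursive half-graph and the witness at once, and the $+2$ for the two new vertices.

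The step I expect to be the main obstacle is designing this strengthened hypothesis and checking that the splicing closes. The difficulty is structural: a cutting vertex $b$ fixes, for every other vertex, which side of its cut that vertex lies on, so when one reuses $b$ as a new half-graph vertex, the witness needed for the next round may be pushed to the side of $b$ on which its relation to the older part of the half-graph is no longer controlled; and the naive remedy of drawing the new half-graph vertices from the witness set itself fails, since a high-$2$-rank set can be a clique and contain no non-edge at all. Threading this needle --- and keeping the sides of all the nested cuts mutually compatible --- is essentially the whole content of the argument, and the very loose bound $2^{k+2}-2$ reflects that, once the hypothesis is right, a crude single-exponential accounting is enough; the base case $h(1)=6$ is immediate, as a half-graph of length $1$ is just a non-edge while a complete graph has $2$-rank at most $1$. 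Assembling the two numbered parts of the Fact from the rank characterisation together with the bisection of part (2) and the rank bound of part (1) is then only bookkeeping.
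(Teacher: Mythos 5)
Your sketch of part (2) is fine: the iterated bisection of a half-graph of length $2^{n+1}$, placing the node $b_\rho$ at the dyadic boundary and letting the direction of the inequality $i<j$ decide the label $\ell$, is exactly the standard argument, and the slack in $2^{n+1}$ absorbs the boundary/strictness issues as you say. (Note the paper itself does not prove this Fact at all --- it is quoted from Hodges, Lemma 6.7.9 --- so the only question is whether your argument stands on its own.)

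For part (1), however, what you have written is a plan, not a proof, and you say so yourself: the entire content of the statement is the inductive step that turns a full special tree of height $2^{k+2}-2$ into a half-graph of length $k$, and that step is exactly the one you leave open (``designing this strengthened hypothesis and checking that the splicing closes''). The recursion $h(k)=2h(k-1)+2$ is consistent with the bound $2^{k+2}-2$, but consistency of the numerology is not evidence that the induction closes: the difficulty you correctly identify --- that the cutting vertex $b$ only controls its relation to the leaves of its own subtree, not to the previously extracted half-graph vertices, and that a high-rank ``witness'' set may be a clique and so cannot supply the needed non-edges --- is precisely the obstruction the classical argument (in Hodges, or in \emph{Classification Theory} II.2) is built to overcome, via a carefully formulated inductive statement in which the half-graph being constructed is anchored to specified nodes and branches of the tree and the case analysis at each node decides which side carries the next witness. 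Without formulating that statement and verifying the splicing, the claimed equivalence with a $2$-rank bound does no work (it merely restates the existence of the tree), and the base case aside, nothing in your outline rules out the failure mode you describe. As it stands, part (1) --- the substantive half of the Fact --- is unproved.
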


\begin{conv} \label{tbc}
When $G$ is $k$-edge stable, we will say ``let $t=t(k)$ be the tree bound'' to abbreviate 
``let $t = t(k)$ be the best strict upper bound on the height of a full special tree from Fact \ref{tree-fact},'' 
so $t(k) \leq 2^{k+2}-2$.
\end{conv}

When $G$ is $k$-edge stable, we can use Fact \ref{tree-fact} to partition into $\epsilon$-excellent sets as follows. 
(Note that the same proof gives existence of $\epsilon$-good sets in the case where we take the $B_\eta$'s to be singletons.)

\begin{claim} \label{claim1}
Suppose $G$ is a $k$-edge stable graph, $t=t(k)$ is the tree bound, and $\epsilon < \frac{1}{2^t}$. Then for every 
$A \subseteq G$ with $|A| \geq \frac{1}{\epsilon^t}$ there exists an $\epsilon$-excellent subset $A^\prime \subseteq A$ with 
$|A^\prime| \geq \epsilon^{t-1}|A|$. 
\end{claim}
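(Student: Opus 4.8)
The plan is to argue by contradiction: suppose no $\epsilon$-excellent $A' \subseteq A$ with $|A'| \ge \epsilon^{t-1}|A|$ exists, and from this produce a full special tree of height $t$ in $G$, contradicting Fact~\ref{tree-fact}(1) --- recall that by Convention~\ref{tbc} the tree bound $t=t(k)$ is a strict upper bound on the heights of such trees.

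First I would recursively build a binary tree of subsets $\langle A_\rho : \rho \in 2^{\leq t} \rangle$ of $A$, together with witnessing good sets $\langle B_\rho : \rho \in 2^{<t} \rangle$, maintaining the size bound $|A_\rho| \ge \epsilon^{|\rho|}|A|$. Put $A_{\langle\rangle} = A$. Given $A_\rho$ with $\rho \in 2^{<t}$: since $|\rho| \le t-1$ we have $|A_\rho| \ge \epsilon^{|\rho|}|A| \ge \epsilon^{t-1}|A|$, so by the contradiction hypothesis $A_\rho$ is not $\epsilon$-excellent, and unwinding Definition~\ref{d:excellent} yields an $\epsilon$-good $B_\rho \subseteq G$ such that both $\{ a \in A_\rho : \trv(a,B_\rho) = 1 \}$ and $\{ a \in A_\rho : \trv(a,B_\rho) = 0 \}$ have size $\ge \epsilon|A_\rho|$; let these be $A_{\rho^\smallfrown\langle 1\rangle}$ and $A_{\rho^\smallfrown\langle 0\rangle}$ respectively, so the bound passes to level $|\rho|+1$. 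Along the way I would note that each leaf set $A_\eta$ ($\eta \in 2^t$) has $|A_\eta| \ge \epsilon^t|A| \ge 1$ by the hypothesis $|A| \ge 1/\epsilon^t$, hence is nonempty, and that each $B_\rho$ is nonempty, since $|A_\rho| \ge \epsilon^{t-1}|A| \ge 1/\epsilon$ forces $\epsilon|A_\rho| \ge 1$.

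Next I would extract the special tree itself. For each $\eta \in 2^t$ choose a vertex $a_\eta \in A_\eta$; these are automatically distinct, since distinct $\eta, \eta'$ split at some node $\rho$ into the disjoint sets $A_{\rho^\smallfrown\langle 0\rangle}, A_{\rho^\smallfrown\langle 1\rangle}$. The subtle point --- and the only place the hypothesis $\epsilon < 1/2^t$ is used --- is the choice of the node vertices. Fix $\rho \in 2^{<t}$; any leaf $\eta$ with $\rho^\smallfrown\langle\ell\rangle \trianglelefteq \eta$ has $a_\eta \in A_{\rho^\smallfrown\langle\ell\rangle}$, i.e. $\trv(a_\eta, B_\rho) = \ell$, which means all but $<\epsilon|B_\rho|$ of the $b \in B_\rho$ satisfy $R(a_\eta,b)^\ell$. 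Since there are at most $2^{t-|\rho|} \le 2^t$ leaves below $\rho$, the set of $b \in B_\rho$ failing the required relation for at least one of them has size less than $2^t\,\epsilon\,|B_\rho|$, which is $< |B_\rho|$; pick $b_\rho$ outside it. Then $\rho^\smallfrown\langle\ell\rangle \trianglelefteq \eta$ implies $R(a_\eta, b_\rho)^\ell$ for all $\rho, \eta$, so $\langle b_\rho : \rho \in 2^{<t}\rangle$ and $\langle a_\eta : \eta \in 2^t\rangle$ form a full special tree of height $t$, contradicting Fact~\ref{tree-fact}(1).

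I expect this last extraction step to be the main obstacle: the recursion only hands us set-level majority opinions $\trv(\cdot, B_\rho)$, and one must convert the entire good set $B_\rho$ into a single vertex $b_\rho$ simultaneously realizing all $2^{t-|\rho|}$ prescribed edges and non-edges --- which is precisely what $\epsilon$ being below $2^{-t}$ buys. The rest is routine bookkeeping: tracking sizes so that the intermediate sets stay large enough to be non-excellent and the leaves stay nonempty, and a quick check that labelling the two halves $1$ and $0$ is consistent with the $R(a_\eta, b_\rho)^\ell$ convention in the definition of a full special tree. Having reached a contradiction, the hypothesis fails, so some such $\epsilon$-excellent $A'$ exists (indeed, running the recursion greedily and stopping on a branch once an excellent set is met, some $A_\rho$ with $|\rho| \le t-1$ must be $\epsilon$-excellent, and then $|A_\rho| \ge \epsilon^{|\rho|}|A| \ge \epsilon^{t-1}|A|$), proving the claim.
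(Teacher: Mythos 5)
Your proposal is correct and follows essentially the same argument as the paper: repeatedly split each non-excellent $A_\rho$ using a good witness $B_\rho$ into two halves of size $\ge \epsilon|A_\rho|$, then extract a full special tree of height $t$ by picking leaves from the (nonempty, since $|A|\ge 1/\epsilon^t$) level-$t$ sets and picking each node $b_\rho$ outside the union of at most $2^t$ exceptional sets of size $<\epsilon|B_\rho|$, which is where $\epsilon<2^{-t}$ enters --- exactly the paper's construction, merely phrased as a global contradiction rather than as ``continue until the construction halts.'' The closing parenthetical of your proof recovers the paper's formulation verbatim, so there is nothing to add.
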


\begin{proof} 
By induction on $m \leq t$ let us try to choose 
$\langle A_\eta : \eta \in 2^m \rangle$ and $\langle B_\eta : \eta \in 2^m \rangle$ such that: 

\begin{enumerate}
\item[(i)] in general, for $m \geq 0$ and $\rho$ of length $m$, $B_\rho$ is an $\epsilon$-good set witnessing that $A_\rho$ is not $\epsilon$-excellent;  
\item[(ii)] for $m=0$, $A_\emptyset = A$ and $B_\emptyset$ is an $\epsilon$-good set witnessing that $A_\emptyset$ is not $\epsilon$-excellent. 
\item[(iii)] for $m > 0$ and $\eta$ of length $m-1$, 
\begin{itemize}
\item $A_{\eta^\smallfrown \langle 0 \rangle} = \{ a \in A_\eta : \trv(a, B_\eta) = 0 \}$ and 
\item $A_{\eta^\smallfrown \langle 1 \rangle} = \{ a \in A_\eta : \trv(a, B_\eta) = 1\}$
\end{itemize}
noting that both are nonempty, and in fact of size at least $\epsilon |A_\eta|$, since $B_\eta$ witnesses that $A_\eta$ is not excellent. 
(Why is $\trv(a, B_\eta)$ always defined? Because $B_\eta$ is good.) 
\end{enumerate}
If we can indeed choose the sets $B_\eta$ at each stage up to and including $m = t$, 
we arrive at a contradiction by building a special tree as follows. 
First we choose the leaves: for each $\eta \in 2^t$, let 
$a_\eta$ be any element of the set $A_\eta$, which is nonempty as it is of size $\geq \epsilon^t |A|$ and $|A| \geq \frac{1}{\epsilon^t}$. 
Next we choose the nodes. For each $m < t$, each $\rho \in 2^m$, and 
each $\eta \in 2^t$ such that $\rho \tlf \eta$, the set $U_{\eta} = \{ b \in B_\rho : R(a_\eta, b)^{1-\trv(a_\eta, B_\rho)} \}$ is small, 
of size $<\epsilon |B_\rho|$ because $B_\rho$ is $\epsilon$-good. All together $| U_\rho := \bigcup \{ U_{\eta} : \rho \tlf \eta \in 2^t \} | < 2^t \epsilon |B_\rho| 
< |B_\rho|$, with the last inequality from our assumption that $\epsilon < \frac{1}{2^t}$. Choose $b_\rho$ to be any element of 
$B_\rho \setminus U_\rho$. This constructs a special tree, which gives us our contradiction. 

Therefore we were wrong in assuming the construction could continue: one of the $A_\eta$ must have been $\epsilon$-excellent, 
and it will be a subset of $A$ of size at least $\epsilon^m |A|$ where $m = $length$(\eta) \geq t-1$. 
\end{proof}

Our plan is to partition the graph $G$ into $\epsilon$-excellent sets by induction: running Claim \ref{claim1} on the graph, 
setting aside the excellent subset $A_0$, running Claim \ref{claim1} on the remainder $A = G \setminus A_0$ to obtain $A_1$, $\cdots$ 
and iterating as far as we can until the 
leftover vertices are few enough to distribute among the excellent sets already obtained without causing much trouble. The issue to be solved is that 
we would like to end up with an equipartition, but 
as we've stated it, Claim \ref{claim1} gives us little control on the size of the excellent sets it returns. If the construction stops at 
the zero-th level the excellent set will have size $|A|$; at the first level, size $\geq \epsilon |A|$; at the second level, 
$\geq \epsilon^2 |A|$; and so on, with a lower bound of $\epsilon^{t-1}|A|$.  A first simple modification  
is to choose a short list of possible sizes in advance, as we now do.\footnote{\ref{size-seq}(b) suggests that 
we will aim for an equipartition into pieces of size $s_{t-1}$.} 

\begin{defn} \label{size-seq}
Call the sequence $s_0, \dots, s_{t-1}$ of natural numbers a \emph{size sequence for $\epsilon$} when:
\begin{enumerate}
\item[(a)] $\epsilon s_\ell \geq s_{\ell+1}$ for $\ell < t-2$. 
\item[(b)] $s_{t-1}$ divides all other elements of the sequence.  
\item[(c)] $s_{t-1} > t$. 
\end{enumerate}
\end{defn}

\begin{claim} \label{claim2}
Suppose $G$ is a $k$-edge stable graph, $t=t(k)$ is the tree bound, and $\epsilon < \frac{1}{2^t}$. Let $s_0, \dots, s_{t-1}$ 
be a size sequence for $\epsilon$.  
Then for every 
$A \subseteq G$ with $|A| \geq \max \{ s_0, \frac{1}{\epsilon^n} \}$ there exists an $\epsilon$-excellent subset $A^\prime \subseteq A$ with 
$|A^\prime| = s_\ell$ for some $\ell = 0, \dots, t-1$. 
\end{claim}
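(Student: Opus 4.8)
The plan is to rerun the tree construction from the proof of Claim~\ref{claim1}, carrying one extra bookkeeping constraint: the candidate set at level $m$ should have size \emph{exactly} $s_m$. The clauses of Definition~\ref{size-seq} are arranged precisely so that this can be maintained throughout. Explicitly, I would try to choose by induction on $m\le t$ sequences $\langle A_\eta:\eta\in 2^m\rangle$ and $\langle B_\eta:\eta\in 2^m\rangle$ as in (i)--(iii) there, with two modifications: first, $A_\emptyset$ is now an arbitrary subset of $A$ of size exactly $s_0$ --- possible since $|A|\ge s_0$; second, whenever a new set is produced by splitting, it is immediately cut down to the next prescribed size. The invariant to carry along is that for each $m\le t-1$ and $\eta\in 2^m$ we have $|A_\eta|=s_m$, and every $a\in A_\eta$ satisfies $\trv(a,B_{\eta\restriction j})=\eta(j)$ for all $j<m$.

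For the inductive step at a level $m\le t-1$: if some $A_\eta$ is $\epsilon$-excellent, we stop --- that $A_\eta$ is an $\epsilon$-excellent subset of $A$ of size exactly $s_m$, so we output it with $\ell=m$. Otherwise each $A_\eta$ is witnessed non-$\epsilon$-excellent by an $\epsilon$-good $B_\eta$, which (as $B_\eta$ is good) splits $A_\eta$ into $\{a\in A_\eta:\trv(a,B_\eta)=0\}$ and $\{a\in A_\eta:\trv(a,B_\eta)=1\}$, each of size $\ge\epsilon|A_\eta|=\epsilon s_m\ge s_{m+1}$ by clause (a) of Definition~\ref{size-seq}; then take $A_{\eta^\smallfrown\langle 0\rangle}$ and $A_{\eta^\smallfrown\langle 1\rangle}$ to be arbitrary subsets of these two pieces of size exactly $s_{m+1}$. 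The key point is that this trimming costs nothing structurally: membership of $a$ in one of the two pieces is decided by the single value $\trv(a,B_\eta)$, so after trimming each $A_{\eta^\smallfrown\langle\ell\rangle}$ still lies wholly on its side of $B_\eta$ and the invariant passes to level $m+1$. If the construction never stops, then at level $t$ we have $2^t$ nonempty sets $A_\eta$ ($\eta\in 2^t$) --- nonempty since each is a piece of the non-excellence split of a set of size $s_{t-1}>0$ --- together with the $\epsilon$-good sets $B_\rho$ ($\rho\in 2^{<t}$); choosing leaves and nodes from these exactly as in Claim~\ref{claim1} (this is where $\epsilon<\frac{1}{2^t}$ is used, to leave each node pool $B_\rho$ with more than its $<2^t\epsilon|B_\rho|$ ``bad'' elements) produces a full special tree of height $t$, contradicting $k$-edge stability by Fact~\ref{tree-fact} and Convention~\ref{tbc}. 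So the construction must halt at some level $m\le t-1$, and it halts with exactly the desired $\epsilon$-excellent subset of size $s_m$.

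I do not expect a serious obstacle; this is a routine sharpening of Claim~\ref{claim1} once one sees that a candidate set may be trimmed to a prescribed size ``from the top down'' with no side effects, because cutting down a piece of the partition induced by an $\epsilon$-good set only discards vertices that already voted the same way on that set. The things to get right are the bookkeeping: (i) the inequality $\epsilon s_m\ge s_{m+1}$ must hold at every level $m$ at which we still have to descend, which is exactly what clause (a) provides; and (ii) the level-$t$ leaf pools must be nonempty, which holds because a non-$\epsilon$-excellent set splits into two nonempty pieces and, in any case, $s_{t-1}>t\ge 1$ by clause (c). Clauses (b) and (c) are otherwise aimed at the subsequent equipartition step rather than at this claim. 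It is also worth recording, as in Claim~\ref{claim1}, that every singleton is $\epsilon$-excellent, so a fortiori the descent is bound to terminate before the sizes could shrink to nothing.
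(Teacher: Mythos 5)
Your proposal is correct and is essentially the paper's own proof: run the tree construction of Claim~\ref{claim1} with $A_\emptyset$ cut to size exactly $s_0$ and each split piece trimmed to the next prescribed size (possible since $\epsilon s_m \ge s_{m+1}$, clause (a) of Definition~\ref{size-seq}), halting at an $\epsilon$-excellent $A_\eta$ of size $s_m$ for some $m\le t-1$, or else producing a full special tree of height $t$ and contradicting $k$-edge stability via Fact~\ref{tree-fact}. Your closing aside that singletons are $\epsilon$-excellent is true but is neither needed nor stated in Claim~\ref{claim1}; otherwise the argument matches the paper's.
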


\begin{proof}
Just as in the proof of Claim \ref{claim1}, adding to (ii) of the inductive hypothesis the condition that at stage $m$, both 
$A_{\eta^\smallfrown \langle 0 \rangle}$ and $A_{\eta^\smallfrown \langle 1 \rangle}$ have size exactly $s_m$. 
This is handled by a simple modification to the construction. At stage $0$, let $A_\emptyset$ be any subset of 
$A$ of size $s_0$. At stage $m$, by inductive hypothesis, the set $A_\eta$ will have size $s_{m-1}$, so    
$A_{\eta^\smallfrown \langle 0 \rangle}$ and $A_{\eta^\smallfrown \langle 1 \rangle}$ 
will have size at least $\epsilon s_{m-1} \geq s_m$; if necessary throw away vertices to obtain size exactly $s_m$. 
\end{proof}

At this point, two points about excellent sets come in to play. First, that any two interact uniformly: 

\begin{claim} \label{excellent-regular}
Suppose $A \subseteq G$, $B \subseteq G$ and $\epsilon < \frac{1}{2}$ and both $A, B$ are $\epsilon$-excellent. Then:
\begin{itemize}
\item[(a)] the pair $(A,B)$ is $\epsilon$-uniform, 
meaning that there is 
a truth value $\trv = \trv(A,B) \in \{ 0, 1 \}$ and that for all but $<\epsilon |A|$ of the elements of $A$, for 
all but $<\epsilon|B|$ of the 
elements of $B$, $R(a,b)^\trv$.

\item[(b)] for $\zeta = \sqrt{2\epsilon}$, the pair $(A,B)$ is $\zeta$-regular with $d(A,B) > 1-\zeta$ or $d(A,B) <\zeta$. 
\end{itemize}
\end{claim}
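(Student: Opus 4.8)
The plan is to obtain (a) directly from the definitions of \emph{good} and \emph{excellent}, and then to derive (b) from (a) by a single count of the non-edges (resp.\ edges) spanned by an arbitrary pair of large subsets.

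For (a): since $B$ is $\epsilon$-good and $\epsilon<\frac12$, for every vertex $a\in G$ the truth value $\trv(a,B)$ is well-defined, and $R(a,b)^{\trv(a,B)}$ holds for all but $<\epsilon|B|$ of the $b\in B$. I would then feed the good set $B$ into the hypothesis that $A$ is $\epsilon$-excellent: one of the two sets $\{a\in A:\trv(a,B)=\ell\}$, $\ell\in\{0,1\}$, has size $<\epsilon|A|$, and I take $\trv=\trv(A,B)$ to be the value $\ell$ for which the \emph{other}, large set occurs. Unwinding this: for all but $<\epsilon|A|$ of $a\in A$ one has $\trv(a,B)=\trv$, and for each such $a$, $R(a,b)^{\trv}$ holds for all but $<\epsilon|B|$ of the $b\in B$. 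That is precisely $\epsilon$-uniformity.

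For (b): by replacing $R$ with $\neg R$ I may assume $\trv(A,B)=1$. Let $A_0$ be the set of $a\in A$ with $\trv(a,B)=1$ having fewer than $\epsilon|B|$ non-neighbours in $B$; by (a), $|A\setminus A_0|<\epsilon|A|$. For arbitrary $X\subseteq A$, $Y\subseteq B$ with $|X|\ge\zeta|A|$ and $|Y|\ge\zeta|B|$, I bound the number of non-edges between $X$ and $Y$ by splitting $X$ into $X\setminus A_0$ (contributing at most $|X\setminus A_0|\cdot|Y|<\epsilon|A|\,|Y|$) and $X\cap A_0$ (each such $a$ contributing fewer than $\epsilon|B|$ non-neighbours inside $Y$, for a total $<|X|\,\epsilon|B|$). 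Dividing by $|X|\,|Y|$ and using $|X|\ge\zeta|A|=\sqrt{2\epsilon}\,|A|$ and $|Y|\ge\sqrt{2\epsilon}\,|B|$ yields $d(X,Y)>1-\epsilon\frac{|A|}{|X|}-\epsilon\frac{|B|}{|Y|}\ge 1-\frac{2\epsilon}{\sqrt{2\epsilon}}=1-\zeta$. Taking $X=A$, $Y=B$ (legitimate since $\zeta<1$) gives $d(A,B)>1-\zeta$, the density half of (b); and since $d(X,Y)$ and $d(A,B)$ then both lie in the interval $(1-\zeta,1]$ of length $\zeta$, their difference is $<\zeta$, which is $\zeta$-regularity. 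When $\trv(A,B)=0$ the identical count on edges rather than non-edges gives $d(X,Y)<\zeta$ for all large $X,Y$ and $d(A,B)<\zeta$.

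I do not expect a serious obstacle: (a) is essentially a restatement of the two defining conditions, and the only point in (b) that needs care is keeping the two sources of error separate --- the exceptional rows $X\setminus A_0$ and, within each good row, the exceptional columns --- since it is precisely their combination that yields the coefficient $2\epsilon$ and hence the loss from $\epsilon$ to $\zeta=\sqrt{2\epsilon}$ after dividing by the lower bounds $\zeta|A|$, $\zeta|B|$ on $|X|$, $|Y|$.
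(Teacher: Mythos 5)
Your proof is correct and follows the same route the paper takes: part (a) is indeed just an unwinding of goodness of $B$ plus excellence of $A$, and part (b) is exactly the ``straightforward calculation'' which the paper delegates to Claim 5.17 of \cite{MiSh:978} (counting the exceptional rows $X\setminus A_0$ and the exceptional columns within each good row, then dividing by $\zeta|A|\cdot\zeta|B|$ to get the loss from $\epsilon$ to $\zeta=\sqrt{2\epsilon}$). Your observation that both $d(X,Y)$ and $d(A,B)$ land in the same interval of length $\zeta$, so regularity follows at once, is the intended argument.
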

\begin{proof}
(a) restates excellence, (b) is a straightforward calculation made in \cite{MiSh:978} Claim 5.17. 
\end{proof}

Second, for $\epsilon < \frac{1}{2}$, observe that $\epsilon$-excellence need not be preserved under subset, nor indeed is $\epsilon$-goodness. 
(Suppose the partition induced on $A$ by $b$ is imbalanced, with the smaller side of size $\ell$: $A$ has a subset of size $2\ell$ whose 
partition induced by $b$ is exactly balanced, so this subset clearly isn't $\epsilon$-good.) To preserve excellence, a subset should retain 
roughly proportional intersection with every partition induced by every element in the graph on $A$. 
However, we can ensure this if we do it randomly, because $k$-edge stability implies the VC dimension is also small. This is explained by the following, an instance of the Sauer-Shelah lemma. 

\begin{fact}[see \cite{Sh:c} Theorem II.4.10(4) p. 72] \label{a4}
If $G$ is a $k$-edge stable graph, then for any finite $A \subseteq G$, 
\[ | \{ \{ a \in A : R(a,b) \} : b \in G \} | \leq |A|^k \]
more precisely, $~\leq~ \Sigma_{i\leq k} \binom{|A|}{i}$. 
\end{fact}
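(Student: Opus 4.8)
The statement is an instance of the Sauer--Shelah lemma, so the plan is to separate the model-theoretic input from the combinatorial core. Fix a finite $A \subseteq G$ and consider the set system $\mathcal{S} = \{ S_b : b \in G \}$ on ground set $A$, where $S_b = \{ a \in A : R(a,b) \}$; what we must bound is $|\mathcal{S}|$. The first step is to show that $k$-edge stability forces $\mathcal{S}$ to have small VC dimension, and the second is to invoke the combinatorial fact that a set system of bounded VC dimension on an $n$-element ground set has only polynomially many members.

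For the first step I would argue contrapositively: if some $a_1, \dots, a_m \in A$ are shattered by $\mathcal{S}$, then $G$ contains the order property of length roughly $m$. Indeed, for each $j$ choose $b_j \in G$ with $S_{b_j} \cap \{a_1,\dots,a_m\} = \{a_1,\dots,a_{j-1}\}$; then $R(a_i,b_j)$ holds iff $i<j$, and since distinct $j$ realise distinct subsets the $b_j$ are pairwise distinct. A little extra care (using irreflexivity of $R$, and if necessary passing to a shattered set of twice the size, or a short pigeonhole argument) arranges that the $a_i$ and $b_j$ are all distinct, so $\langle a_i \rangle, \langle b_j \rangle$ literally witness a half-graph of length about $m$; $k$-edge stability then caps $m$, i.e. bounds the VC dimension of $\mathcal{S}$ by a constant close to $k$. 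An equivalent route, staying inside the machinery already set up in this section, is to feed the shattering data into Fact \ref{tree-fact}(2) by building a full special tree.

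For the second step I would use the Sauer--Shelah inequality: a set system on an $n$-element ground set that shatters no set of size $d+1$ has at most $\sum_{i \le d} \binom{n}{i}$ members. The standard proof is a downward induction on the ground set: delete a point $x$, and compare the trace of $\mathcal{S}$ on $A \setminus \{x\}$ with the ``link'' family $\{ Y \subseteq A\setminus\{x\} : Y \in \mathcal{S} \text{ and } Y \cup \{x\} \in \mathcal{S} \}$, noting that any set shattered by the link family, together with $x$, is shattered by $\mathcal{S}$, so the link family shatters no set of size $d$; summing the two inductive estimates closes the induction. Combining this with the VC bound from the first step yields $|\mathcal{S}| \le \sum_{i \le k}\binom{|A|}{i}$, and $\sum_{i\le k}\binom{|A|}{i}$ is at most $|A|^k$ once $k$ and $|A|$ are not too small, which gives the cruder displayed form.

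The only genuine subtlety is the exact constant: making the passage ``shattered set of size $m$'' $\Rightarrow$ ``half-graph of length $k$'' come out with the precise $k$ claimed, that is, the distinctness bookkeeping in the half-graph extraction, is exactly what is done cleanly in \cite{Sh:c}, Theorem II.4.10(4), which is why in this expository account the result is cited rather than reproved. The Sauer--Shelah step itself is completely standard.
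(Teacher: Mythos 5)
The paper does not actually prove this Fact: it is quoted verbatim from \cite{Sh:c} (Theorem II.4.10(4)), and the surrounding text only frames it as ``an instance of the Sauer--Shelah lemma,'' which is precisely the frame you adopt, so there is no internal proof to compare against. Your two-step route --- $k$-edge stability bounds the VC dimension of the family of neighbourhood traces on $A$, and the Sauer--Shelah induction (delete a point, compare the trace family with the link family) then bounds the number of traces --- is the standard argument, and both steps are sound as far as the qualitative statement goes. The one place your sketch does not deliver the statement exactly as displayed is the constant, and you are right to flag it: irreflexivity only rules out collisions $b_j = a_i$ with $i<j$, and even after exploiting the symmetry of $R$, a witness system for the traces $\{a_1,\dots,a_{j-1}\}$ on a shattered set of size $k+1$ can contain collisions that cannot all be removed by discarding a single index, so the robust version of your argument is the one you mention in passing --- pass to a shattered set of roughly twice the size --- which bounds the VC dimension only by about $2k$ and yields $\sum_{i\le 2k}\binom{|A|}{i}$ rather than $\sum_{i\le k}\binom{|A|}{i}$. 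This loss is harmless for everything the paper does with the fact (Corollary \ref{st-vc} and Corollary \ref{vc-claim} only need a bound depending on $k$ alone, and one could equally route the shattering data through Fact \ref{tree-fact}(2) as you suggest), but the sharp exponent $k$ is genuinely the content of the cited theorem, obtained there by different, model-theoretic means; deferring it to \cite{Sh:c}, as you do and as the paper itself does, is the appropriate resolution rather than a defect of your write-up.
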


\begin{cor} \label{st-vc}
Whenever $k$ is fixed, for any 
graph $G$ which is $k$-edge stable, any $A \subseteq G$ and any $\epsilon > 0$ 
the family 
\[ \{ \{ a \in A : R(a,b) \} : b \in G \} \cup \{ \{ a \in A : \neg R(a,b) \} : b \in G \}~~ \subseteq \mathcal{P}(A) \]
has VC dimension $\leq k+1$. 
\end{cor}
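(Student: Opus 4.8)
The plan is to deduce the corollary from Fact \ref{a4} together with the Sauer--Shelah dichotomy, handling the negated instances by complementation. First I would record that the single family $\mathcal{C} := \{ \{ a \in A : R(a,b) \} : b \in G \}$ has VC dimension at most $k$: if some $B \subseteq A$ with $|B| = k+1$ were shattered by $\mathcal{C}$, then $\{ S \cap B : S \in \mathcal{C} \}$ would be all of $\mathcal{P}(B)$, hence of size $2^{k+1}$, whereas Fact \ref{a4} applied with the finite set $B$ in place of $A$ gives $|\{ \{a \in B : R(a,b)\} : b \in G \}| \le \sum_{i \le k} \binom{k+1}{i} = 2^{k+1} - 1$. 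Next, complementation does not affect shattering: a (finite) $Y \subseteq A$ is shattered by $\mathcal{F} \subseteq \mathcal{P}(A)$ if and only if it is shattered by $\mathcal{F}^{c} := \{ A \setminus F : F \in \mathcal{F} \}$, since $Z \mapsto Y \setminus Z$ is an involution of $\mathcal{P}(Y)$ carrying the trace condition for $\mathcal{F}$ to that for $\mathcal{F}^{c}$. As $\{ \{ a \in A : \neg R(a,b) \} : b \in G \}$ is exactly $\mathcal{C}^{c}$, it too has VC dimension at most $k$.

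It remains to bound the VC dimension of the union $\mathcal{S} := \mathcal{C} \cup \mathcal{C}^{c}$, which is itself closed under complementation in $A$, and the target is that passing from $\mathcal{C}$ to $\mathcal{C} \cup \mathcal{C}^{c}$ costs at most one dimension. Given $Y$ shattered by $\mathcal{S}$ with $|Y| = n$, I would fix $y_0 \in Y$ and set $Y' = Y \setminus \{ y_0 \}$; for each $W \subseteq Y'$ the set $W \cup \{ y_0 \}$ is a trace on $Y$ of some $S \in \mathcal{S}$ with $y_0 \in S$, so the subfamily $\{ S \in \mathcal{S} : y_0 \in S \}$ already shatters $Y'$. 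Each member of that subfamily is either a slice $\{ a : R(a,b) \}$ with $R(y_0,b)$ or a co-slice $\{ a : \neg R(a,b) \}$ with $\neg R(y_0,b)$, hence in either case is determined by a single parameter $b$ with prescribed edge to $y_0$; I would push this reduced configuration --- either straight into the $k$-edge-stability hypothesis or back through Fact \ref{a4} --- to force $|Y'| \le k$, giving $n \le k+1$.

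The step I expect to be the main obstacle is precisely this control of the VC dimension of a union. Merely adding the Sauer--Shelah estimates for $\mathcal{C}$ and $\mathcal{C}^{c}$ only yields, for any shattered $Y$, the inequality $2^{|Y|} \le 2\sum_{i \le k}\binom{|Y|}{i}$, and by the symmetry $\binom{n}{i} = \binom{n}{n-i}$ this fails only once $|Y| \ge 2k+2$, so the crude estimate gives merely $2k+1$ rather than $k+1$. To recover the sharp constant one must exploit that $\mathcal{C}^{c}$ is not an arbitrary companion family but genuinely the family of negated $R$-slices of the very same graph $G$: from a set of size $k+2$ shattered by $\mathcal{S}$ one wants to extract an honest half-graph of length $k$ in $G$ --- say by realizing a maximal nested chain of initial (or co-initial) segments of the shattered set and reading off the witnessing vertices as one side of a half-graph --- contradicting $k$-edge-stability. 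Making that extraction clean, so that only the single extra dimension is lost, is where I anticipate the real work lies.
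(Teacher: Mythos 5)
Your first two steps are fine and are essentially what the statement rests on: Fact \ref{a4} applied to a $(k+1)$-element subset gives at most $2^{k+1}-1$ positive traces, so the family $\mathcal{C}$ of positive slices has VC dimension at most $k$, and complementation inside $A$ preserves shattering, so $\mathcal{C}^c$ does too. But the step you defer --- getting the union $\mathcal{C}\cup\mathcal{C}^c$ down to $k+1$ --- is not just ``the real work,'' it is unattainable as stated: the bound $k+1$ in Corollary \ref{st-vc} fails in general. Take $k=1$, so that $1$-edge stability forces $G$ to be a complete graph, let $G=K_4$ and let $A$ consist of three of its vertices. Then the negative slices trace out $\emptyset$ (from the fourth vertex) and every singleton $\{a_i\}$ (from $b=a_i$, by irreflexivity), while the positive slices trace out $A$ and every doubleton $A\setminus\{a_i\}$; hence the union family shatters $A$, a set of size $3>k+1=2$. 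The same obstruction shows up in your proposed extraction: from a set of size $n$ shattered by the union family, running witnesses along a maximal chain and splitting by positive/negative type only yields a half-graph of length about $n/2$, so edge-stability can only force $n\lesssim 2k$, not $n\le k+2$; likewise your ``fix $y_0$'' reduction still leaves a mixed family of slices and co-slices and gives no way to force $|Y'|\le k$.

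The paper offers no separate argument here --- the corollary is meant to be read off from Fact \ref{a4} by exactly the counting you dismiss as crude: if $Y$ is shattered by the union family then $2^{|Y|}\le 2\sum_{i\le k}\binom{|Y|}{i}$, which is impossible once $|Y|\ge 2k+2$, so the VC dimension is at most $2k+1$ (and the $K_4$ example shows this order of magnitude is correct). The stated constant $k+1$ should be regarded as loose; what matters downstream is only that the VC dimension is bounded by a function of $k$ alone, since Corollary \ref{vc-claim} merely needs such a bound to run the random-partition argument. So the correct completion of your write-up is to keep your two correct steps, replace the chase for $k+1$ by the two-line counting bound $2k+1$ you already wrote down, and note explicitly that this weaker constant suffices for every later use in the paper.
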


Again, informally, call a subset of $A$ a \emph{trace} if it is the intersection of $A$ 
with a neighborhood of some $b \in G$, or if it is the complement (in $A$) of the neighborhood of some $b \in G$. 
By the previous corollary there are relatively few traces. 
In order that the partition retain excellence for a related $\epsilon$, it suffices that every piece of the partition 
intersect all of the traces of $A$ in approximately the expected proportion.  

\begin{cor} \label{vc-claim}
For all $\zeta > \epsilon > 0$ and $r, k \geq 1$, there exists $M_{\ref{vc-claim}} = M(\zeta, \epsilon, r, k)$ such that if:
\begin{itemize}
\item[(a)] $A$ is a subset of a $k$-edge stable graph $G$, $|A| \geq M_{\ref{vc-claim}}$, 
\item[(b)] $A$ is $\epsilon$-excellent in $G$, and 
\item[(c)] the size of $A$ is divisible by $r$
\end{itemize}
then there exists a partition of $A$ into $r$ disjoint pieces of equal size each of which is $\zeta$-excellent.  
\end{cor}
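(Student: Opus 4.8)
The plan is to produce the partition by the probabilistic method in one shot: take a uniformly random equipartition of $A$ into $r$ blocks $A_1,\dots,A_r$ (possible since $r\mid|A|$ by (c)), and show that once $|A|$ is large enough, with positive probability every block is $\zeta$-excellent. The link between ``random'' and ``$\zeta$-excellent'' is a sampling estimate from VC theory. By Corollary \ref{st-vc} the family of \emph{traces} of $A$ --- the sets $\{a\in A:R(a,b)\}$ together with their complements in $A$, as $b$ ranges over $G$ --- has VC dimension at most $k+1$; hence by the standard $\delta$-approximation theorem there is $m_0=m_0(\delta,k,r)$ such that a uniformly random $m$-element subset $A'\subseteq A$ with $m\ge m_0$ fails, with probability $<\tfrac{1}{2r}$, to satisfy $\big|\,|T\cap A'|/|A'| - |T|/|A|\,\big|<\delta$ for every trace $T$ of $A$ (call such an $A'$ a \emph{$\delta$-sample} of $A$). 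Marginally, each block $A_i$ is a uniformly random subset of $A$ of size $|A|/r$, so as soon as $|A|/r\ge m_0(\delta,k,r)$ a union bound over $i\le r$ gives probability $>\tfrac12$ that all of $A_1,\dots,A_r$ are $\delta$-samples of $A$; this fixes $M_{\ref{vc-claim}}:=r\cdot m_0(\delta,k,r)$ once $\delta=\delta(\zeta,\epsilon)$ is chosen below.

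It remains to prove the combinatorial implication: \emph{if $A$ is $\epsilon$-excellent (hence $\epsilon$-good) and $A'\subseteq A$ is a $\delta$-sample of $A$, then $A'$ is $\zeta$-excellent}, provided $\delta$ is small relative to the gap between $\zeta$ and $\epsilon$. Fix a $\zeta$-good $B\subseteq G$ and suppose for contradiction that $U_1:=\{a\in A':\trv(a,B)=1\}$ and $U_0:=A'\setminus U_1$ both have size $\ge\zeta|A'|$. Since $B$ is $\zeta$-good, each $a\in U_1$ is joined to more than $(1-\zeta)|B|$ elements of $B$ and each $a\in U_0$ to fewer than $\zeta|B|$; averaging over $a$ and exchanging the order of summation gives $\mathbb{E}_{b\in B}\big[\,|\{a\in U_1:R(a,b)\}|/|U_1|\,\big]>1-\zeta$ together with the companion bound $\mathbb{E}_{b\in B}\big[\,|\{a\in U_0:R(a,b)\}|/|U_0|\,\big]<\zeta$. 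By Markov's inequality some single $b\in B$ satisfies both $|\{a\in U_1:R(a,b)\}|\ge(1-3\zeta)|U_1|$ and $|\{a\in U_0:R(a,b)\}|\le 3\zeta|U_0|$, so the trace $T=\{a\in A:R(a,b)\}$ has $|T\cap A'|\ge(1-3\zeta)\zeta|A'|$ and $|A'\setminus T|\ge(1-3\zeta)\zeta|A'|$. Pushing these back to $A$ through the $\delta$-sample property, both $|T\cap A|$ and $|A\setminus T|$ exceed $\big((1-3\zeta)\zeta-\delta\big)|A|$, which contradicts $\epsilon$-goodness of $A$ as soon as $\delta<(1-3\zeta)\zeta-\epsilon$ --- a positive quantity in the regime in which the corollary is applied, with $\zeta$ somewhat larger than $\epsilon$ and both small. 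Choosing such a $\delta$ and substituting into the first paragraph finishes the argument; the special case $B=\{b\}$ of the implication is exactly the statement that a $\delta$-sample of an $\epsilon$-good set is $\zeta$-good, so this also records that $A'$ itself is $\zeta$-good.

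The heart of the matter --- the step I would dwell on in the write-up --- is the combinatorial implication, and in particular the decision \emph{not} to approximate the ``majority-opinion'' sets $D_B=\{a\in A:\trv(a,B)=1\}$ directly: there is no evident reason these form a family of bounded VC dimension as $B$ varies over the $\zeta$-good subsets of $G$, so the sampling theorem cannot be applied to them. The averaging step circumvents this by converting a failure of \emph{excellence} in the small set $A'$ into a failure of \emph{goodness} in the big set $A$ --- that is, into a single trace being too balanced --- and traces are precisely what Sauer--Shelah (Corollary \ref{st-vc}) keeps under control. The rest is bookkeeping: fixing mutually compatible values of $\epsilon$, $\zeta$, $\delta$ and of the $\tfrac{1}{2r}$ error budget, and invoking a version of the $\delta$-approximation theorem valid for sampling without replacement (which only sharpens the with-replacement bounds).
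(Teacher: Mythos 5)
Your route is the same as the one the paper sketches: take a uniformly random equipartition, use Corollary \ref{st-vc} (Sauer--Shelah) to see that the traces of $A$ form a family of bounded VC dimension, conclude that for $|A|$ large every block is simultaneously a $\delta$-sample of $A$ with positive probability, and then transfer excellence from $A$ to the blocks. Your averaging step --- converting a failure of $\zeta$-excellence of a block $A'$, witnessed by a $\zeta$-good $B$, into a single $b\in B$ whose trace splits $A'$ into two parts each of size roughly $\zeta|A'|$, and pulling this back to $A$ to contradict $\epsilon$-goodness --- is precisely the content compressed into the paper's sentence that it ``suffices that every piece of the partition intersect all of the traces of $A$ in approximately the expected proportion,'' and your observation that the opinion sets $\{a\in A:\trv(a,B)=1\}$ need not have bounded VC dimension as $B$ varies correctly identifies why one must reduce to traces at all. (Note, as you implicitly do, that the transfer uses only $\epsilon$-goodness of $A$, not its excellence.)

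The one genuine shortfall is quantitative, and you flag it without closing it. Your contradiction needs $(1-3\zeta)\zeta-\delta\ge\epsilon$, i.e.\ a gap $\zeta-\epsilon$ exceeding roughly $3\zeta^2$, whereas the statement quantifies over \emph{all} $\zeta>\epsilon>0$: for instance $\epsilon=0.15$, $\zeta=0.2$ is a nontrivial instance (since $\zeta<\frac12$) that your argument does not reach, and the loss is not removable by tuning the Markov threshold --- replacing $3\zeta$ by $\lambda$ forces $\lambda>2\zeta$ so that the two exceptional sets of $b$'s do not cover $B$, and the reduction to a single trace then always costs about $2\zeta^2$ plus $\delta$. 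For the use made of the corollary in Theorem \ref{su} this is harmless: there the corollary is invoked with $\epsilon_{\mathrm{cor}}=\alpha=\epsilon/4$ and $\zeta=\beta=\epsilon/3$ with $\epsilon<2^{-t}$, so $(1-3\beta)\beta-\alpha$ is about $\epsilon/12>0$ and a choice such as $\delta=\epsilon/24$ works. But as a proof of the corollary as literally stated your argument is incomplete; you should either record the needed hypothesis (say $\zeta-\epsilon>3\zeta^2+\delta$, which holds whenever $\zeta\ge\frac43\epsilon$ and $\zeta$ is small, matching how the result is applied) or explain how to handle an arbitrary gap, which would require a different transfer than the single-trace reduction. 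You should also state the standing assumptions $\zeta<\frac13$ (for the Markov step) and $\zeta\le\frac12$ (so that $\trv(a,B)$ is well defined for $\zeta$-good $B$), which you use silently.
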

 
\emph{About the bounds}: Recently Ackerman, Freer, and Patel \cite{AFP} have computed a bound on this quantity, in the context of proving a stable regularity lemma for hypergraphs, indeed for finite structures in arbitrary finite relational languages.   To read 
\cite{AFP} Proposition 4.5 specialized to the case of graphs, 
the number $|\ml|$ of non-equality symbols is 1, 
the  maximal arity of a relation $q_{\ml}$ is $2$, and $\hat{\tau}$ is their notation for the tree bound.

Returning to the main line of \cite{MiSh:978} \S 5, 
here is the core result of that section, essentially \cite{MiSh:978} Theorem 5.18 (with some more information about bounds). 

\begin{theorem}[Stable regularity lemma -- version with excellence] \label{su} 
For each $k \geq 1$ and $\epsilon > 0$, there are 
$N = N(\epsilon, k)$  
such that if $G$ is any sufficiently large $k$-edge stable graph   
there is a partition of $G$ into disjoint pieces 
$A_0, \dots, A_{\ell-1}$ with $\ell \leq N$ such that:
\begin{itemize}
\item[(a)] the partition is equitable, i.e. for $i,j < N$, $|A_i|$, $|A_j|$ differ by at most 1. 
\item[(b)] for each $i <N$, $A_i$ is $\epsilon$-excellent. 
\item[(c)] \emph{thus} every pair $(A_i, A_j)$ is $\epsilon$-uniform in the sense of \ref{excellent-regular}(a). 
\item[(d)] if $\epsilon < \frac{1}{2^t}$, $N \leq 4 \left(    \frac{8}{\epsilon}    \right)^{t-2}$, where $t=t(k)$ is the tree bound from $\ref{tbc}$. 
\end{itemize}
\end{theorem}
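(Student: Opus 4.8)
The plan is to deduce Theorem~\ref{su} from the tools already in hand: Claim~\ref{claim2} extracts $\epsilon_1$-excellent pieces of prescribed sizes, Corollary~\ref{vc-claim} cuts such a piece into many equal excellent pieces, and Claim~\ref{excellent-regular}(a) reads off uniformity of every resulting pair. We may assume $\epsilon<2^{-t}$, since otherwise we run the argument with $\min\{\epsilon,2^{-t}\}$ and absorb the change into $N(\epsilon,k)$, while clause~(d) only speaks in this regime. Fix an auxiliary $\epsilon_1<\epsilon$, say $\epsilon_1=\epsilon/2$, small enough that Corollary~\ref{vc-claim} with its $\zeta$ set to $\epsilon$ turns $\epsilon_1$-excellence into $\epsilon$-excellence. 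Given a large $k$-edge stable $G$ with $n=|G|$, choose a size sequence $s_0,\dots,s_{t-1}$ for $\epsilon_1$ scaled to $n$: with $R=\lceil 1/\epsilon_1\rceil$, take $s_{t-1}$ maximal subject to $R^{\,t-2}s_{t-1}\le n$ and $s_{t-1}>t$, set $s_{t-2}=s_{t-1}$, and $s_\ell=R^{\,t-2-\ell}s_{t-1}$ for $\ell<t-2$. Conditions (a)--(c) of Definition~\ref{size-seq} hold because $\epsilon_1 R\ge 1$ and (a) is only required for $\ell<t-2$; note that every $s_\ell$ is a power of $R$ times $s_{t-1}$, that $s_{t-1}\asymp n\,R^{-(t-2)}$, and that $s_0\le n$.

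Next I would run a greedy extraction. From $A:=G$, while $|A|\ge\max\{s_0,\epsilon_1^{-t}\}$, apply Claim~\ref{claim2} with parameter $\epsilon_1$ to pull out an $\epsilon_1$-excellent $E\subseteq A$ with $|E|\in\{s_0,\dots,s_{t-1}\}$, and replace $A$ by $A\setminus E$. This halts, since each $E$ has $\ge s_{t-1}\ge 1$ vertices; let $E_0,\dots,E_{p-1}$ be the pieces and $A^*$ the remainder, with $|A^*|<s_0$ (for $n$ large, $\epsilon_1^{-t}\le s_0$, so only that constraint bites). Each $|E_i|$ is a power of $R$ times $s_{t-1}$, hence divisible by $s_{t-1}$, so Corollary~\ref{vc-claim} splits $E_i$ into $|E_i|/s_{t-1}\le R^{\,t-2}$ equal $\epsilon$-excellent blocks of size exactly $s_{t-1}$ --- legitimate because the ratio $|E_i|/s_{t-1}$ is bounded by $R^{\,t-2}$ and $\zeta=\epsilon,\epsilon_1,k$ are fixed, while $|E_i|\ge s_{t-1}\to\infty$ eventually overtakes the relevant value of $M_{\ref{vc-claim}}$. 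We now have $G\setminus A^*$ partitioned into $q=(n-|A^*|)/s_{t-1}\le n/s_{t-1}$ blocks, each $\epsilon$-excellent and of size $s_{t-1}$.

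Finally I would distribute the fewer than $s_0$ vertices of $A^*$ among the $q$ blocks as evenly as possible. Their sizes then differ by at most $1$, which is equitability (clause~(a)). For the count (clause~(d)), $q\le n/s_{t-1}\le R^{\,t-2}$ up to the rounding in the choice of $s_{t-1}$; since $R=\lceil 2/\epsilon\rceil\le 4/\epsilon$ this gives $q\le 2(4/\epsilon)^{\,t-2}\le 4(8/\epsilon)^{\,t-2}$ for $n$ large, comfortably inside the bound. Clause~(b) requires that the enlarged blocks are still $\epsilon$-excellent; granting that, clause~(c) --- every pair of blocks $\epsilon$-uniform --- is immediate from Claim~\ref{excellent-regular}(a), applicable since $\epsilon<2^{-t}<\tfrac12$ and both blocks in any pair are $\epsilon$-excellent.

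The delicate point, and the one I expect to be the main obstacle, is clause~(b): one must verify that adjoining the leftover does not destroy $\epsilon$-excellence. Here $|A^*|<s_0=R^{\,t-2}s_{t-1}$ while $q$ is of order $R^{\,t-2}$, so each block gains only $O(s_{t-1})$ vertices; making this a \emph{small} enough fraction of $s_{t-1}$ requires sharpening the bookkeeping --- either via a robustness estimate (adjoining $o(|C|)$ vertices perturbs the relevant proportions by $o(1)$) applied with a little extra room in $\epsilon_1$, or by absorbing $A^*$ in several sub-rounds of the same construction at successively smaller scales so that no block grows by more than a controlled fraction, at the price of a controlled erosion of the excellence parameter that stays below $\epsilon$. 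Everything else --- the greedy extraction and the refinement to a common size --- is a direct application of Claims~\ref{claim2} and~\ref{excellent-regular} and Corollary~\ref{vc-claim}, and ``sufficiently large $G$'' simply means $n$ exceeds the thresholds imposed by $s_0$, by $M_{\ref{vc-claim}}$, and by $s_{t-1}>t$.
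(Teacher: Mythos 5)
Your outline follows the paper's own strategy (extract excellent pieces of prescribed sizes by Claim~\ref{claim2}, refine to a common size by Corollary~\ref{vc-claim}, absorb the leftover to equalize, read off uniformity from Claim~\ref{excellent-regular}(a)), but the step you yourself flag as ``the main obstacle,'' clause~(b), is a genuine gap, and with your choice of scales it is not merely delicate but unrecoverable. You take $s_{t-1}$ \emph{maximal} with $R^{t-2}s_{t-1}\le n$, so $s_0=R^{t-2}s_{t-1}$ is within $R^{t-2}$ of $n$. The greedy loop stops as soon as fewer than $s_0$ vertices remain, so it may stop after extracting a \emph{single} piece of size $s_{t-1}\le nR^{-(t-2)}$ (nothing forces Claim~\ref{claim2} to return a larger piece), leaving a remainder $A^*$ containing almost all of $G$ to be distributed among as few as one block; and even in favourable runs your own estimate shows a block of size $s_{t-1}$ may gain $\Theta(s_{t-1})$ arbitrary vertices, a constant fraction of its size, which destroys $\epsilon$-excellence (the added vertices may all disagree with the majority opinion about some good $B$). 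The paper closes exactly this by keying the \emph{largest} size to a small fraction of $n$ and leaving two layers of slack: with $\alpha=\epsilon/4<\beta=\epsilon/3<\epsilon$ it chooses $c$ so that $s_0=q^{t-1}c\le\frac{\alpha n}{2}$, hence the remainder has size $<\frac{\alpha n}{2}$ while there are about $n/c$ blocks of size $c$, so each block gains only on the order of $\alpha c$ vertices, and the calculation of \cite{MiSh:978} Claim 5.14(3) shows $\beta$-excellence degrades only to $3\beta=\epsilon$. Your single margin $\epsilon_1=\epsilon/2$ is spent entirely on the random refinement, leaving nothing for absorption; adopting the three-parameter bookkeeping and $s_0=O(\epsilon n)$ is what makes clause~(b) go through, and the count still lands within $4\left(\frac{8}{\epsilon}\right)^{t-2}$.

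A secondary problem: your sequence sets $s_{t-2}=s_{t-1}$, which meets Definition~\ref{size-seq}(a) only because that clause is stated for $\ell<t-2$; but the proof of Claim~\ref{claim2} needs $\epsilon_1 s_{m-1}\ge s_m$ at every stage $m\le t-1$, since at the last stage one must trim $A_{\eta^\smallfrown\langle i\rangle}$, a proper subset of a set $A_\eta$ of size $s_{t-2}$, to size \emph{exactly} $s_{t-1}$ --- impossible when $s_{t-2}=s_{t-1}$. So Claim~\ref{claim2} cannot be invoked for your sequence as proved; you need the honest geometric sequence $s_\ell=q^{t-1-\ell}c$ (as in the paper), which costs one more factor of $q$ in the count and is the reason the paper works with $\alpha=\epsilon/4$ rather than $\epsilon/2$.
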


\begin{proof}
We're given $k$, {thus} the tree bound $t$ (recall that by Fact \ref{tree-fact}, $t \leq 2^{k+2}-2$).  Let $n$ be the size of $G$. 
Without loss of generality, to simplify notation, suppose $\epsilon < \frac{1}{2^t}$, and  $\epsilon$ is a fraction whose denominator is $1$. 

For the proof, we'll need $0 < \alpha < \beta < \epsilon$; for definiteness, we will use $\alpha = \frac{\epsilon}{4}$, $\beta = \frac{\epsilon}{3}$. 
\br

Let $q = \lceil \frac{1}{\alpha} \rceil \in \mathbb{N}$, so $\frac{2}{\alpha} \geq q \geq \frac{1}{\alpha}$.  Let $c$ be maximal such that 
$q^{t-1} c \in \left(   \frac{\alpha n}{2} - q^{t-1}, \frac{\alpha n}{2}   \right]$. 
Define a sequence by $s_0 = q^{t-1}c, s_1 = q^{t-2}c, \dots, s_{t-1} = c$.  This is a size sequence (\ref{size-seq}): it is integer valued and satisfies the conditions on divisibility, and since $G$ is sufficiently large (see step 4) 
 $c > t$. 

\br
\noindent \emph{Step 1}. By induction, construct a partition $\{ B_j : j < j_* \} \cup \{ B \}$ of $G$ into $\alpha$-excellent pieces each with size $s_\ell$ for some $\ell \in \{ 0, \dots, t-1 \}$, \emph{plus} a remainder $B$ of size $< s_0$.  
That is, apply Claim \ref{claim2} to $G$ to obtain $B_0$; if the remainder $G \setminus B_0$ has size $\geq s_0$, apply Claim \ref{claim2} again to obtain $B_1$; 
continue until fewer than $s_0$ elements remain. 

\br
\noindent \emph{Step 2}. As the elements $B_j$ of the partition are sufficiently large, \emph{see step 4}, we may randomly partition each of them into pieces of size $s_{t-1}$, i.e. $c$, which are $\beta$-excellent. 
Call this new partition $\{ B^\prime_i : i < i_* \} \cup \{ B \}$ -- we still have the remainder. 

\br
\noindent \emph{Step 3}. Distribute the remainder among the pieces $\{ B^\prime_i : i < i_* \}$ to obtain $\{ A_i : i < i_* \}$ where for all $i, j < i_*$, $||A_i| - |A_j|| \leq 1$. 
Since our choice of $c$ implies $s_0 \leq \frac{\alpha n}{2}$, a short calculation shows that this new partition remains $3\beta$-excellent.
(The calculation is given in \cite{MiSh:978} Claim 5.14(3), with $\beta$ here for $\epsilon^\prime$ there, and noting 
that our assumption $(*)$ implies $c > \frac{1}{\beta}$.)

\br

\noindent \emph{Step 4}. As for a lower bound on $n$, it is sufficient that  

(i) $\frac{\alpha^2 n}{4}-1 > \max \{ t, \frac{3}{\epsilon} \}$ and also 

(ii) $\frac{\alpha^2 n}{4}-1 > 
  \frac{1}{\alpha} M_{\ref{vc-claim}}(\beta, \alpha, r, k )$ for all integer values of $r$ in $ \{ \frac{\alpha n}{2q^{t-1}}-1, \dots, \frac{\alpha n}{2 q^{t-1}} \} $, it suffices to check for the largest, say $r = \lceil \frac{\alpha n}{2 q^{t-1}} \rceil$.  

Note that by our construction, $c \geq (\frac{\alpha n}{2} - q^{t-1})(q^{t-1})^{-1} = \frac{\alpha n}{2 q^{t-1} } -1 \geq \frac{\alpha^2 n}{4}-1$. 
So the assumption (i) says that $c$ i.e. $s_{t-1}$ is sufficiently large to get a size sequence and to run steps 1 and 3, and condition (ii) says that all the pieces in the partition are large enough to admit a random partition in step 2. 

\br
\noindent \emph{Step 5}. 
To bound the number of pieces: first note that by our choice of $c$, $\frac{\alpha n}{2} - q^{t-1} < q^{t-1} c$, so $\frac{\alpha n}{2 q^{t-1}} -1 < c$, so $\frac{\alpha n }{4 q^{t-1}} < c$.  
Note also that by choice of $q$, $q \leq \frac{2}{\alpha}$. 
Then we can bound $\frac{n}{c}$ by: $\frac{n}{c} \leq (n)\left( \frac{\alpha n}{4 q^{t-1}}\right)^{-1} = \frac{4 q^{t-1}}{\alpha} \leq  \frac{4 (\frac{2}{\alpha})^{t-1}}{\alpha} = 4 \left(  \frac{2}{\alpha}   \right)^{t-2}$. 
In terms of $\epsilon$, this is $4 \left(  \frac{8}{\epsilon}    \right)^{t-2}$. 
\end{proof}

\begin{disc}
Often such lemmas are stated with an input $m$ corresponding to a lower bound on $\ell$. It should be clear that 
by shrinking $\epsilon$, one can ensure the minimum number of pieces is as large as desired. 
\end{disc}

\begin{disc}
One could easily convert the appearance of $t$ in \ref{su}(d),(e) into a bound stated in terms of $k$ using Fact \ref{tree-fact}; 
but in cases where $t$ is computable directly from the graph $($perhaps it is much less than 
$2^{k+2}-2$$)$ the present form gives more information. 
\end{disc}

We've done the work needed for Theorem \ref{t:sr} (i.e. Conclusion 5.19 of \cite{MiSh:978}). 

\begin{proof}[Proof of Theorem \ref{t:sr}]
Apply Theorem \ref{su} with $k$ and $\frac{\epsilon^2}{2}$, and since the statement does not mention $t$, replace $k$ by $2^{k+2}-2$.  
\end{proof}

\section{A spectrum of regularity lemmas}

Section 5 is the final section of \cite{MiSh:978}. What were the aims of the rest of the paper? 

As the plural in the title suggests, the main thread of the paper investigates the structure of stable graphs by proving a spectrum of 
Regularity Lemmas, capturing different aspects of stability. 
For the first, inspired by the fact that infinite models of stable theories contain large indiscernible sets, 
we prove that for finite stable graphs or hypergraphs (indeed, in any finite stable relational structure, suitably defined) 
one can extract much larger indiscernible sets than expected from Ramsey's theorem, of size $n^c$ rather than $\log n$ for 
$c$ depending on the set of relations and their `stability,'  as measured by rank.  (This was well exposited, in the case of 
graphs, in \cite{mt}, where it found a nice application.) By iteratively using this theorem, 
one then can, with some additional care, build a first regularity lemma for stable graphs in which all pieces are cliques or 
independent sets of size $n^c$, plus a remainder, though necessarily the number of pieces grows with the size of the graph. 
The second and third regularity lemmas in some sense progressively relax the `uniformity' conditions on the pieces until 
arriving at the fourth, the stable regularity lemma described above, in which the pieces are now `only' approximately 
uniform, i.e. $\epsilon$-excellent, at the gain of the number of pieces no longer growing with the size of the graph. 

We may note that for a structure $M$ and infinite $A \subseteq M$ and ultrafilter $\de$ on $A$, there is an average type 
$\operatorname{Av}(A, \de)$. For stable $T$ we know that for suitable sets (so-called indiscernible) the filter is degenerated: 
the co-finite sets are enough. 
The notions of $\epsilon$-good and $\epsilon$-excellent are finitary analogues. 

The reader may wonder: the importance of both stability theory and the regularity lemma were independently well understood 
by the early eighties. Why did they come together some thirty-odd years later? As we have written elsewhere, to our knowledge 
the first connections of Szemer\'edi regularity to model theory came in the context of thinking about the relation of finite and 
infinite combinatorics necessary to understand Keisler's order \cite{mm-thesis}, \cite{mm3}.   It was not by mistake that 
that \cite{MiSh:978}, the first joint paper of the authors, came at the beginning of our joint work on Keisler's order.  
Indeed, ideas from regularity play a certain, perhaps more hidden role in our recent discovery \cite{MiSh:1167} that Keisler's order 
has the maximum number of classes, continuum many (we may refer the interested reader to the open problems section in that paper). As our knowledge of this order develops, our understanding of the 
finite is also changing.

\br

\end{document}